\newtheorem{thm}{Theorem}[section]
\newtheorem{lemma}[thm]{Lemma}
\begin{document}

\title{{\bf A genealogy of the translation planes of order 25}}

\author{Jeremy M. Dover}
\address{445 Poplar Leaf Dr., Edgewater, MD 20137 USA}
\email{doverjm@gmail.com}

\begin{abstract}
In 1992 Czerwinski and Oakden (The translation planes of order 25, {\em J. Combin. Theory Ser. A}, 59:193-217, 1992) provided an exhaustive list of all spreads of $PG(3,5)$ and thus of all translation planes of that order. At that time, the authors provided a partial correlation of these planes to those then-described in the literature, but the intervening years have provided additional construction techniques and classification results. This paper provides an extensive classification of these planes against the currently-known construction techniques, finding two planes that do not belong to any current infinite family. The author provides additional details for these two planes to help put them in context, as a spur for further research.
\end{abstract}

\maketitle

\section{Introduction}
The seminal paper of Bruck and Bose~\cite{bruckbose} provides a correspondence between translation planes and spreads of projective spaces of odd dimension, and using this correspondence in 1992 Czerwinski and Oakden~\cite{czoak} provided a complete list of spreads of $PG(3,5)$, and thus a complete list of translation planes of order 25. Their list comprised five subregular spreads, which include the regular spread and all Andr\'e spreads; eight spreads which are not subregular but contain one or more reguli, and are thus derivable in the sense of Ostrom~\cite{ostrom}; and eight spreads which contain no regulus.  The translation planes associated with each of these three classes are denoted $S_1 \ldots S_5$, $A_1 \ldots A_8$, and $B_1 \ldots B_8$, respectively. Czerwinski and Oakden identified fourteen of these planes as having appeared elsewhere in the literature, though their analysis overlooked planes generated by several then-existing constructions, particularly spreads obtained from flocks of quadratic cones and nest replacement.

Moorhouse~\cite{moorhouse} has explicitly constructed all of the translation planes of order 25, providing models for each plane at the website~\cite{moorhouse:web}. Moorhouse has computed a variety of invariants for all of these planes, including the 5-rank of their incidence matrices, and the order and orbit structure of their automorphism groups. Moreover, Moorhouse has determined all of the different ways of transforming these planes, using dualization and derivation, potentially applied multiple times, into other non-translation planes and between themselves.

With so much known about the planes, another look at them seems unnecessary. However, the existing presentations of these planes tend to treat the planes as individual objects, but in many cases these planes exist in the context of infinite families of planes generated by algebraic, geometric or combinatorial methods. In this paper, we want to identify how these planes fit in with the many infinite classes of projective planes that have been constructed in the literature over the years. In some sense, we are attempting to address the question of ``why" these planes exist.

\section{Constructing Planes of Order 25}

Rather than starting with the known translation planes of order 25 and attempting to categorize them, our approach starts with identifying plane construction techniques and applying them to create planes of order 25; Johnson, et al.'s~\cite{jjb} {\em Handbook of Finite Translation Planes} has been an unparalleled resource. We then classify the resulting planes based on calculated invariants, extensively leveraging the data reported by Moorhouse~\cite{moorhouse:web}; we have also used Magma~\cite{magma} to create each of the 21 translation planes of order 25 and verify Moorhouse's calculations. One advantage of this approach is that in several cases we note collapsing between constructions, where seemingly disparate construction techniques yield the same plane. However, our emphasis on classifying these planes into infinite families elides the fact that many of these planes were initially discovered through other techniques; please do not interpret the results of this section as any sort of assertion of primary discovery.

\subsection{Subregular Planes}
As defined by Bruck~\cite{bruck}, subregular planes are those that can be obtained from a spread of $PG(3,q)$ by a series of regulus reversals starting with a regular spread. Orr~\cite{orr} showed that every subregular plane can be obtained by starting with a regular spread and reversing a set of pairwise disjoint reguli. Thus the problem of finding all subregular planes is equivalent to finding all projectively inequivalent sets of pairwise disjoint reguli in a regular spread, and as reported by Czerwinski and Oakden, we find that $S_1$ is the Desarguesian plane, $S_2$ is the Hall plane~\cite{hall}, $S_3$ and $S_4$ are Andr\'e planes~\cite{andre}, and $S_5$ is a subregular plane that is not an Andr\'e plane~\cite{bruck}.

\subsection{Nearfield Planes}
There are two nearfields of order 25: the Dickson nearfield and the exceptional nearfield labelled ``I" by Zassenhaus~\cite{zassenhaus}. Czerwinski and Oakden report that the regular nearfield plane is $A_2$ and the exceptional nearfield plane is $S_4$, but our calculations suggest the opposite. Indeed, it is easy to see from the definition of the regular nearfield of order 25 that it is an Andr\'e quasifield, and thus the corresponding plane must be subregular.

\subsection{Flag-Transitive Planes}
Foulser~\cite{foulser} provided a construction of two flag-transitive translation planes of order 25, which Czerwinski and Oakden report as $B_1$ and $B_2$. Foulser's planes were generalized to an infinite family by Baker and Ebert~\cite{be:flagtrans}, and using their construction method we confirmed that these are the only two flag-transitive planes of order 25.

\subsection{Hering Quasifield Planes}
Hering~\cite{hering} provides an explicit description of an infinite family of quasifields, of which there is one example of order 25. By construction we confirmed Czerwinski and Oakden's assertion that it is $B_4$.

\subsection{Rao-Satyanarayana Planes of characteristic 5}
Rao and Satyanarayana~\cite{rao17} give a construction of a spread of $PG(3,5^r)$ for odd $r$ via spread sets of matrices, and there is one example of order 25. This is $A_7$, which is reported by Czerwinski and Oakden and we have confirmed.

\subsection{Rao, Rodabaugh, Wilke and Zemmer Planes}
Rao, Rodabaugh, Wilke and Zemmer~\cite{rao16} provide a construction of translation planes by using ideas similar to Foulser's $\lambda$-planes~\cite{foulserlambda} with the exceptional nearfields. For order 25, there are two such planes. One is $A_4$, which is asserted by Czerwinski and Oakden and we confirmed. The other is $B_5$, which was also found by Walker~\cite{walkersporadic}.

\subsection{Fisher flocks, Walker planes and Baker-Ebert $q$-nests}
The Fisher flocks~\cite{fisher} are flocks of a quadratic cone in $PG(3,q)$ for odd $q$ which can be used to construct spreads via the Klein correspondence. Walker~\cite{walkerfamily} provides a different construction which is equivalent to a flock of a quadratic cone, and which coincides with the Fisher flock when $q=5$. Baker and Ebert~\cite{be:qnest} provide an alternative, but equivalent, formulation of the same spread by replacing a $q$-nest in a regular spread. The corresponding translation plane is $A_3$. Since the spread arises from a flock of a quadratic cone, it is the union of $q$ reguli which pairwise meet in a single distinguished line of the spread. One can perform derivation on any of these reguli, but all of the resulting translation planes are isomorphic to $A_7$. Moorhouse~\cite{moorhouse} confirms that there are no other ways to derive this plane.

\subsection{Pabst-Sherk planes and Ebert $(q+1)$-nests}
Pabst and Sherk~\cite{pabstsherk} use the method of indicator sets to construct a family of translation planes, for which Ebert~\cite{ebert:q+1nest} provides an equivalent formulation by replacing a $(q+1)$-nest of a regular spread. There are two non-isomorphic planes obtained from this construction, and these are $A_2$ and $A_4$. Following Baker, et al.~\cite{bdew}, both of these spreads are spawned from a non-linear hyperbolic fibration, i.e. a set of $q-1$ pairwise disjoint hyperbolic quadrics which with two additional disjoint lines partition $PG(3,5)$. There is only one non-linear hyperbolic fibration of $PG(3,5)$ up to isomorphism: De Clerck et al.~\cite{dgt} prove that there is only one non-linear flock of the quadratic cone in $PG(3,5)$, and Baker et al.~\cite{bep} show that there is a bijection between flocks of the quadratic cone and hyperbolic fibrations. Therefore, these two planes must both come from spreads spawned from the same hyperbolic fibration. This hyperbolic fibration can spawn 16 different spreads by independently picking one of the two ruling families of each hyperbolic quadric in the fibration, but in this case there is a great deal of isomorphism between these spreads, and the only additional plane obtained from this hyperbolic fibration is $A_8$.

\subsection{Bruen chains and Baker-Ebert $(q-1)$-nests}
Bruen chains~\cite{bruen} appear to occur sporadically for different values of $q$, but in $PG(3,5)$ the only Bruen chain is exactly a Baker-Ebert $(q-1)$-nest~\cite{be:q-1nest}. The plane obtained by replacing such a nest is $A_1$. Using Baker and Ebert's formulation, we see that we can obtain additional planes from this plane in several ways. First, there is a companion $(q-1)$-nest which is disjoint from the original, and thus can be paired with the original to create a $2(q-1)$-nest. Replacing this $2(q-1)$-nest with either of its two possible nest replacements yields the plane $B_6$. On the other hand, there are two types of reguli disjoint from the original $(q-1)$-nest, namely the reguli in the companion nest, and some reguli in the bundle that defines the nest. Reversing a regulus of the companion $(q-1)$-nest lets us derive the $(q-1)$-nest plane to get $A_5$. However, we can also derive one of the other reguli disjoint from the nest to obtain $A_6$. No other plane can be thus derived, consistent with Moorhouse~\cite{moorhouse}.

\subsection{Baker-Ebert mixed nests}
Baker and Ebert~\cite{be:nestgap} provide a construction of replaceable nests of a range of sizes, but in $PG(3,5)$ the only such nest not previously described is a 5-nest, the reversal of which yields $A_6$. These mixed nest spreads are known to be derivable, as they contain at least $\frac12 (q-3)$ pairwise disjoint reguli. Moorhouse~\cite{moorhouse} reports that $A_6$ can be derived from $A_1$, which we confirm.

\subsection{Baker-Ebert-Weida tabs}
Baker et al.~\cite{bew} provide several constructions of nests by taking various unions of Bruen chains. For example, two chains which meet in a single regulus are called a {\em single tab}, and the authors show that a single tab is a replaceable $(q+1)$-nest. In this paper, the authors construct a single tab in $PG(3,5)$ which can be reversed to create the plane $A_2$. However, we have determined computationally that there are two ways to create a single tab from Bruen chains, and the other creates the plane $B_3$.

Table~\ref{T1} summarizes the situation, ordered according to Czerwinski and Oakden's taxonomy.
\noindent
\begin{table}
\centering
\caption{The translation planes of order 25}{\label{T1}}
\begin{tabular}{|c|c|c|l|}
\hline
Label & p-rank & Group Order & Constructions\\ \hline\hline
$S_1$ & 226 & 304668000000 & Desarguesian\\ \hline
$S_2$ & 251 & 3600000 & Hall\\ \hline
$S_3$ & 260 & 720000 & Andr\'e\\ \hline
$S_4$ & 258 & 1440000 & \makecell{Andr\'e \\ Regular nearfield}\\ \hline
$S_5$ & 259 & 720000 & Subregular\\ \hline\hline
$A_1$ & 262 & 360000 & \makecell{Bruen chain \\ $(q-1)$-nest}\\ \hline
$A_2$ & 255 & 2880000 & \makecell{Exceptional nearfield \\ Pabst-Sherk/$(q+1)$-nest}\\ \hline
$A_3$ & 253 & 1500000 & Fisher flock\\ \hline
$A_4$ & 256 & 180000 & \makecell{Rao, Rodabaugh, Wilke, Zemmer\\Pabst-Sherk/$(q+1)$-nest}\\ \hline
$A_5$ & 259 & 60000 & Derived $(q-1)$-nest\\ \hline
$A_6$ & 259 & 360000 & Mixed nest\\ \hline
$A_7$ & 260 & 300000 & Derived Fisher flock\\ \hline
$A_8$ & 257 & 120000 & $(q+1)$-nest Hyperbolic Fibration\\ \hline\hline
$B_1$ & 258 & 130000 & Flag-transitive\\ \hline
$B_2$ & 262 & 130000 & Flag-transitive\\ \hline
$B_3$ & 264 & 90000 & Baker-Ebert-Weida single tab\\ \hline
$B_4$ & 239 & 1800000 & Hering\\ \hline
$B_5$ & 261 & 4800000 & Rao, Rodabaugh, Wilke, Zemmer\\ \hline
$B_6$ & 258 & 120000 & $2(q-1)$-nest\\ \hline
$B_7$ & 261 & 240000 & \\ \hline
$B_8$ & 262 & 80000 & \\ \hline
\end{tabular}
\end{table}

\section{A new construction for replaceable nests}
By far the most interesting feature of Table~\ref{T1} is the fact that none of the constructions we attempted generated planes $B_7$ or $B_8$. As stated earlier, by starting with the constructions and utilizing all possible parameters for those constructions, we know that we did not miss any planes of types we have already examined. We know that there are no unexamined flocks of quadratic cones or hyperbolic fibrations in $PG(3,5)$, but while we have exhausted the known families of nests, there may be additional nests not in currently described families. With this in mind, we developed an exhaustive search to find nests of reguli in the regular spread of $PG(3,5)$. The search was a straightforward depth-first search with early isomorph rejection, and our search quickly identified 14 distinct nests, up to isomorphism. Of these nests, three are not Bruck-replaceable (see below for definition), and six are already described in Table~\ref{T1}. The remaining five nests can be replaced to obtain planes $S_5$, $B_3$, $B_4$, $B_5$ and $B_7$. Notice in particular that these nests fill one of the remaining two holes in the table. In what follows, we develop a technique to create new replaceable nests whose replacement generates $B_7$. This technique is foreshadowed in Baker, et al.~\cite{bew}, which proves the analogous results for Bruen chains.

We begin with some terminology, mostly following Baker and Ebert~\cite{be:qnest}. A {\em $t$-nest} of reguli in a regular spread $S$ of $PG(3,q)$ is a set of $t$ reguli such that every line contained in at least one regulus in the nest is contained in exactly two reguli of the nest. For simplicity of notation, we will often identify a nest as both a set of reguli and as the set of lines contained in those reguli. Nests were originally conceived as a generalization of Bruen chains, and the intended replacement methodology was to use opposite half-reguli to conduct the replacement. However, Prince~\cite{prince} has recently created nests which can be replaced, but not strictly with half-reguli. To accomodate these examples, let $N$ be a $t$-nest of the regular spread $S$ of $PG(3,q)$, and let $R_1 \ldots R_t$ be the reguli in $N$, with $R'_1 \ldots R'_t$ the corresponding opposite reguli. We say $N$ is {\em hemi-replaceable} if there exist $t$ sets $S_1 \ldots S_t$ such that for all $i$ $S_i \subset R'_i$ and $|S_i| = \frac12 (q+1)$, and the lines in $\bigcup_{i=1}^t S_i$ are pairwise disjoint. The set of $S_i$'s is called a {\em hemi-replacement} for $N$. Note that Weida~\cite{weida} has shown that any replaceable $t$-nest with $t \leq q$ is hemi-replaceable.

One natural way to create sets $S_i$ meeting the criteria for hemi-replaceability is to use the Bruck kernel of the regular spread $S$. The Bruck kernel of the regular spread is a group of order $q+1$ that leaves each line of the regular spread invariant, and cyclically permutes the points of each line. So if $\phi$ is a generator of the Bruck kernel, the orbit of any line of $PG(3,q)$ not in $S$ under $\phi$ is an opposite regulus for some regulus contained in $S$. Taking orbits under $\psi = \phi^2$, we cyclically obtain opposite half-reguli. If a $t$-nest is hemi-replaceable such that the opposite half-reguli are orbits under $\psi$, we say the nest is {\em Bruck-replaceable}. All of the nests in $PG(3,5)$ from known families are Bruck-replaceable.

From the construction of the $2(q-1)$-nests of Baker and Ebert, we have seen an instance where the union of two pairwise disjoint nests is still a nest. Trivially, if two disjoint nests are replaceable, then their union is also a replaceable nest. If two nests are not disjoint, it is still possible for their union to be replaceable, but we need to be more careful in how we proceed. We begin with a straightforward lemma describing the replacement sets for hemi-replaceable nests.

\begin{lemma}
\label{halflemma}
Let $N$ be a hemi-replaceable $t$-nest in the regular spread $S$ of $PG(3,q)$. Let $R_1 \ldots R_t$ be the reguli in $N$, and $R'_1 \ldots R'_t$ their corresponding opposite reguli. If $H=\{S_1 \ldots S_t\}$ is a hemi-replacement for $N$, then $H'=\{S'_1 \ldots S'_t\}$, where $S'_i = R'_i \setminus S_i$, is also a hemi-replacement for $N$.
\end{lemma}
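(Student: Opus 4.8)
The cleanest route actually proves a little more than is asked: a hemi\hbox{-}replacement of $N$ is automatically a \emph{partition} of the point set $P(N):=\bigcup_{\ell\in N}\ell$, and the passage $S_i\mapsto R'_i\setminus S_i$ is then nothing but complementation, which visibly preserves being a partition. So the plan is first to establish the point count behind this. Let $Q_i$ be the hyperbolic quadric ruled by $R_i$ and $R'_i$. Since the lines of $N$ all lie in the spread $S$ they are pairwise disjoint, so each point of $P(N)$ lies on a unique line $\ell$ of $N$; as $\ell$ lies in exactly two reguli of the nest, say $R_i$ and $R_j$, that point lies on $Q_i$ and $Q_j$ and on no other $Q_k$. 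Double counting point--quadric incidences then gives $|P(N)|=\frac12 t(q+1)^2$. Consequently, if $H=\{S_1,\dots,S_t\}$ is a hemi\hbox{-}replacement, the $\frac12 t(q+1)$ pairwise disjoint lines of $\bigcup_i S_i$ cover $\frac12 t(q+1)^2$ distinct points, all lying on $\bigcup_i Q_i=P(N)$; comparing with $|P(N)|$ forces $\bigcup_i S_i$ to cover every point of $P(N)$ exactly once.

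The core step is then a point-by-point comparison, for which I would first record the small structural fact that distinct opposite reguli of $N$ share no line: a common line of $R'_i$ and $R'_j$ would have to meet every line of $R_i\cup R_j$, hence more lines of $S$ than the $q+1$ that any line outside $S$ can meet. Now fix $p\in P(N)$ and let $R_i,R_j$ be the two reguli of $N$ through $p$. Exactly one line $m_i\in R'_i$ and one line $m_j\in R'_j$ pass through $p$; these are distinct, and they are the only lines of any opposite regulus $R'_k$ through $p$, since no line of $R'_k$ with $k\ne i,j$ meets $Q_k$ at $p$. Because $\bigcup_k S_k$ covers $p$ exactly once, exactly one of $m_i\in S_i$, $m_j\in S_j$ holds, and therefore exactly one of $m_i\in S'_i$, $m_j\in S'_j$ holds, where $S'_k=R'_k\setminus S_k$. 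Thus $\bigcup_k S'_k$ covers every point of $P(N)$ exactly once (and its lines lie on the $Q_k$, hence inside $P(N)$); since these lines are distinct (the $R'_k$ being pairwise disjoint) and no point is covered twice, they are pairwise disjoint. As $|S'_k|=(q+1)-\frac12(q+1)=\frac12(q+1)$ and $S'_k\subset R'_k$, the set $H'$ is a hemi\hbox{-}replacement.

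The one place that needs genuine care is the first paragraph: the definition of hemi\hbox{-}replaceability only requires pairwise disjoint lines, not a covering, so the counting argument upgrading any hemi\hbox{-}replacement to a partition of $P(N)$ is doing the real work — without it there is no reason complementation should be well behaved. The only other non-routine input is the observation that distinct opposite reguli of a nest meet in no line, which is what makes the per-point bookkeeping unambiguous; everything else is the counting sketched above.
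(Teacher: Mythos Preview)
Your proof is correct, and it takes a genuinely different route from the paper's. The paper argues locally: given $x\in S'_i$ and $y\in S'_j$, it observes that they can only meet on a common spread line $\ell\in R_i\cap R_j$, then shows that on $\ell$ the footprints of $S_i$ and $S_j$ are complementary halves, so the footprints of $S'_i$ and $S'_j$ are the \emph{same} complementary halves swapped, forcing $x\cap\ell$ and $y\cap\ell$ into different halves. Your argument is global: you count $|P(N)|=\tfrac12 t(q+1)^2$, observe that any hemi-replacement consists of exactly that many points' worth of disjoint lines inside $P(N)$, hence is forced to be a genuine partition of $P(N)$, and then note that the per-point two-candidate structure (one line from each of $R'_i,R'_j$) makes complementation an involution on partitions. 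What your approach buys is a structural upgrade---hemi-replacements are partitions, not merely packings---which makes the lemma an immediate corollary and dovetails with the paper's later $k$-web framework. What the paper's approach buys is economy: it needs no global count and no auxiliary claim about opposite reguli sharing no line, only the two reguli $R_i,R_j$ at hand.
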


\begin{proof}
It is clear that $S'_i$ is a subset of $R'_i$ and has size $\frac12 (q+1)$ for all $i$; it remains to show that all lines in $N'=\bigcup_{i=1}^t S'_i$ are pairwise disjoint. Let $x,y \in N'$ be distinct lines, such that $x \in S'_i$ and $y \in S'_j$. Clearly $x$ and $y$ are disjoint if $i=j$, so we may assume $i \ne j$. Also if $R_i$ and $R_j$ are disjoint, $x$ and $y$ must be disjoint, so we may assume there exists a line $\ell$ contained in both $R_i$ and $R_j$, and that if $x$ and $y$ meet, it must be in a point of $\ell$. Each line of $S_i$ must meet $\ell$ in a point, so the set $S_i^* = \{\ell \cap m: m \in S_i\}$ is a set of $\frac12 (q+1)$ points of $\ell$. Similarly, $S_j^* = \{\ell \cap n: n \in S_j\}$ is a set of $\frac12 (q+1)$ points of $\ell$. $S_i^*$ and $S_j^*$ must be disjoint, for otherwise there would be lines $m \in S_i \in H$ and $n \in S_j \in H$ of the hemi-replacement that intersect. Therefore we can write $\ell = S_i^* \cup S_j^*$. The line $x$ is in $S'_i$, and since all lines in $R'_i$ are disjoint, $x$ cannot contain a point of $S_i^*$, thus $x$ must meet $\ell$ in a point of $S_j^*$. An analogous argument shows that $y$ must meet $\ell$ in a point of $S_i^*$, showing that $x$ and $y$ cannot meet on $\ell$. Therefore, $x$ and $y$ must be disjoint, proving the lemma.
\end{proof}

With this lemma in place, we can describe our construction.

\begin{thm}
Let $N_1$ and $N_2$ be Bruck-replaceable nests of reguli in the regular spread $S$ of $PG(3,q)$, such that $N_1$ and $N_2$ have no lines in common except those on exactly one regulus $R$. Then the set $N = N_1 \cup N_2 \setminus\{R\}$ is a Bruck-replaceable nest.
\end{thm}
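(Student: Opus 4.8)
The plan is to establish two things in turn: first that $N$ is a $(|N_1|+|N_2|-2)$-nest, and then that the natural combination of Bruck hemi-replacements of $N_1$ and $N_2$ is a Bruck hemi-replacement of $N$. For the nest property I would argue by a short case analysis on a line $\ell$ lying in some regulus of $N$, after the preliminary observation that $N_1\setminus\{R\}$ and $N_2\setminus\{R\}$ share no regulus: a common regulus $R_0\neq R$ would have all $q+1\geq 3$ of its lines in the common line-set of $N_1$ and $N_2$, hence in $R$, and since three mutually skew lines determine a regulus this forces $R_0=R$. If $\ell$ is a line of both $N_1$ and $N_2$ it lies on $R$; since $N_1$ is a nest, $\ell$ lies on $R$ and exactly one other regulus $R_a\in N_1\setminus\{R\}$, and likewise on $R$ and a unique $R_b\in N_2\setminus\{R\}$, with $R_a\neq R_b$ by the observation, so $\ell$ lies on exactly the two reguli $R_a,R_b$ of $N$. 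If $\ell$ is a line of $N_1$ but not of $N_2$, then $\ell\notin R$, so the two reguli of $N_1$ on $\ell$ both differ from $R$ and survive into $N$, while $\ell$ lies on no regulus of $N_2$; the symmetric case is identical. Hence every line of $N$ lies on exactly two of its reguli.

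For Bruck-replaceability, fix Bruck hemi-replacements $H_1$ of $N_1$ and $H_2$ of $N_2$, each selecting for every regulus of the nest a $\psi$-orbit inside the corresponding opposite regulus. The two nests interact only through the shared regulus $R$ and its opposite $R'$: $H_1$ picks one of the two $\psi$-orbits of $R'$ for $R$ and $H_2$ picks one as well. Since the complement in $R'$ of a $\psi$-orbit is the other $\psi$-orbit, Lemma~\ref{halflemma} lets me replace $H_2$ by its flip --- again a \emph{Bruck} hemi-replacement, complements of $\psi$-orbits being $\psi$-orbits --- if necessary, so that $H_1$ and $H_2$ pick complementary halves of $R'$. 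Let $H$ be obtained from $H_1\cup H_2$ by discarding the two pieces lying in $R'$; then $H$ has exactly $|N|$ members, each a $\psi$-orbit in the opposite regulus of the corresponding regulus of $N$, so Bruck-replaceability comes down to showing the lines of $\bigcup H$ are pairwise disjoint. Two lines both coming from $H_1$, or both from $H_2$, are disjoint because $H_1$ and $H_2$ are hemi-replacements, so the whole argument rests on the cross case.

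The cross case is the main obstacle. Suppose $x$ comes from the $H_1$-piece of some $R_i\in N_1\setminus\{R\}$, $y$ from the $H_2$-piece of some $R_j\in N_2\setminus\{R\}$, and $x\cap y=\{P\}$. Each of $x,y$ lies on the hyperbolic quadric of its own regulus, so $P$ lies on a unique line $m\in R_i$ and a unique line $m'\in R_j$; since $m,m'$ are spread lines through the point $P$ they coincide, and the common spread line $m$ lies in $R_i\cap R_j$, hence is common to $N_1$ and $N_2$, hence lies on $R$. The crux is then a bookkeeping argument along $m$, using the structural fact that a Bruck opposite half-regulus meets each line of its regulus in exactly one of the two $\psi$-orbits of points of that line, while its complementary half meets that line in the other $\psi$-orbit (here one also invokes that two distinct reguli share at most two lines, so that for $q\geq 5$ the two relevant halves on $m$ cannot land in the same orbit). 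Tracking the four relevant half-reguli: in $H_1$ the pieces on $R_i$ and on $R$ are disjoint and therefore hit $m$ in complementary point-orbits; the complementary-halves arrangement of $R'$ forces the $H_2$-piece on $R$ to hit $m$ in the same orbit as the $H_1$-piece on $R_i$; and disjointness of the $H_2$-pieces on $R_j$ and on $R$ then forces the $H_2$-piece on $R_j$ to hit $m$ in the opposite orbit. So $x$ meets $m$ in one point-orbit and $y$ in the other, contradicting $x\cap m=P=y\cap m$; hence the lines of $\bigcup H$ are pairwise disjoint, $H$ is a Bruck hemi-replacement, and $N$ is Bruck-replaceable. I expect the delicate points to be the precise formulation of the ``one $\psi$-orbit per line'' fact --- which follows from $\psi$ being a central subgroup of index two in the Bruck kernel that fixes every spread line setwise --- and keeping straight which $\psi$-orbit each of the four half-reguli meets $m$ in; the remainder is formal.
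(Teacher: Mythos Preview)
Your proof is correct and follows the same route as the paper's: the same case split for the nest property, the same use of Lemma~\ref{halflemma} to arrange that $H_1$ and $H_2$ select complementary halves of the opposite regulus of $R$, and the same reduction of the cross case to a common spread line $m\in R$ followed by a partition-of-$m$ argument. Your cross-case bookkeeping is marginally more elaborate than the paper's---you track the $\psi$-orbits on $m$ hit by four half-reguli, which brings in the $q\ge 5$ caveat, whereas the paper simply observes that $x$ cannot meet $m$ in the half $M'$ covered by $R'\subset H_1$ (else $x$ would meet a line of $R'$) and symmetrically for $y$---but the substance is identical.
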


\begin{proof}
Let $\ell$ be any line covered by $N$, and assume without loss of generality that $\ell$ is covered by $N_1$; the analogous argument will hold if $\ell$ is covered by $N_2$. If $\ell$ is also covered by $R$, then there exists exactly one regulus in $N \cap N_1$ which covers $\ell$, but there also exists exactly one regulus in $N \cap N_2$ that covers $\ell$. These reguli must be distinct since $N \cap N_1$ and $N \cap N_2$ are disjoint, so in this case exactly two reguli in $N$ cover $\ell$. If $\ell$ is not contained in $R$, then there are exactly two reguli in $N \cap N_1$ that cover $\ell$. Since $N_1$ and $N_2$ meet only in $R$, $\ell$ cannot lie in any regulus of $N_2$, so again $\ell$ is contained in exactly two reguli of $N$. Hence $N$ is a nest.

Let $H_1$ be a Bruck hemi-replacement of $N_1$, and let $R'$ be the opposite half-regulus to $R$ contained in $H_1$. Since $H_1$ is a Bruck hemi-replacement, $R'$ must be an orbit under the mapping $\psi$ in the Bruck kernel. Let $H_2$ be a Bruck hemi-replacement of $N_2$. Without loss of generality, we may assume that $H_2$ contains $R''$, the opposite half-regulus to $R$ that is the orbit under $\psi$ distinct from $R'$; if this were not the case, then $H_2$ would have to contain $R'$, and we could apply Lemma~\ref{halflemma} to create a Bruck hemi-replacement for $N_2$ containing $R''$. We claim $H = H_1 \cup H_2 \setminus \{R',R''\}$ is a Bruck hemi-replacement for $N$.

Most of the claim is obvious: the elements of $H$ are all orbits under $\psi$, and there are the right number of them. It only remains to show that the lines in $H$ are pairwise disjoint. First note that if $x$ and $y$ are distinct lines of $H$ covered by reguli in $H_1$ (resp. $H_2$), then $x$ and $y$ are disjoint since $H_1$ (resp. $H_2$) is a Bruck hemi-replacement for $N_1$ (resp. $N_2$). Hence if $x$ and $y$ are distinct lines of $H$, we need only consider the case where $x \in R'_1$ and $y \in R'_2$ for some opposite half-reguli $R'_1 \in H_1$ and $R'_2 \in H_2$, where $R_1$ is a regulus of $N_1$ and $R_2$ is a regulus of $N_2$. As in the proof of Lemma~\ref{halflemma}, the only way $x$ and $y$ can meet is if there exists a line $m \in R_1 \cap R_2$ on which their intersection must lie. Since $R_1 \in N_1$ and $R_2 \in N_2$, this line $m$ must be contained in the regulus $R$.

Let $M'$ (resp. $M''$) be the set of points on $m$ contained in the lines of $R'$ ($R''$), the opposite half-regulus to $R$ contained in $H_1$ (resp. $H_2$). $M'$ and $M''$ are clearly disjoint. Since $H_1$ is a hemi-replacement that contains both $R'$ and $R'_1$, we see that the line $x$ must meet $m$ in a point of $M''$. The analogous argument shows that $y$ must meet $m$ in a point of $M'$, hence $x$ and $y$ must be disjoint. This proves the claim.
\end{proof}

Since we have a list of all nests in the regular spread of $PG(3,5)$, it is not difficult to determine which can be obtained from this construction. In this case, there are three examples. Two of these are Baker-Ebert-Weida single tabs, and they generate the planes $A_2$ and $B_3$ upon replacement, as noted previously. The other is the union of a Bruen chain and a Baker-Ebert mixed 5-nest (which when replaced on its own yields $A_6$), and the resulting nest generates $B_7$.

As an example of the robustness of this technique, we executed an exhaustive search for all nests in the regular spread of $PG(3,7)$, which generated 85 examples. Of these 85 examples, 59 are Bruck-replaceable and 12 can be obtained as the union of two smaller Bruck-replaceable nests meeting in a regulus. Of the twelve planes of order 49 generated via nest placement, only two are of a type discussed in the previous section: a Pabst-Sherk plane, and the Rao, Rodabaugh, Wilke and Zemmer plane.

\section{Looking for $B_8$}
With the constructions given in the previous section, we have placed all of the translation planes of order 25, except for $B_8$, into our family tree. In the previous section, we examined the general construction of nest replacement beyond the known infinite families of nests with success, so perhaps applying the same strategy to other more general construction techniques may be fruitful. Perusing Johnson, et al.'s~\cite{jjb} {\em Handbook of Finite Translation Planes}, we were able to identify the general techniques of flocks of quadratic cones, hyperbolic fibrations, $j$-planes and multiple nests as potential candidates. However, we already know that there is only one non-linear conical flock, and thus one non-linear hyperbolic fibration, in $PG(3,5)$, and we have already identified those planes. The $j$-planes are known to be spawned from a hyperbolic fibration~\cite{jpw}, and thus coincide with the Pabst-Sherk/$(q+1)$-nest planes when $q=5$. 

The remaining technique of multiple nests, first explored by Jha and Johnson~\cite{jj}, provides a generalization of nests by defining a $(k;t)$-nest to be a set of $t$ reguli such that each line contained in a regulus of the multiple nest lies in exactly $2k$ such reguli. The definition of multiple nests was motivated to support a classification of translation planes of order $q^2$ admitting a linear group of order $q(q+1)$, and in this case the only planes obtained from multiple nests are conical flock planes, so $B_8$ is not a multiple nest plane either. However the concept of ``higher-order" nests seems promising.

Abstracting the concepts of subregular regulus replacement, nest replacement, and multiple nest replacement, we define a $k$-web ${\mathcal W}$ to be a set of reguli in a regular spread such that every line contained in one of the reguli of ${\mathcal W}$ lies in exactly $k$ distinct reguli of ${\mathcal W}$. In this formulation, we see that a set of pairwise disjoint reguli in a regular spead is exactly a $1$-web, while a nest in a regular spread is a $2$-web, and a $(k;t)$-multiple nest is a $2k$-web. Deriving a spread obtained from nest replacement is a hybrid, where we have a disjoint $1$-web and $2$-web being replaced. Without any other extension, 17 out of the 20 non-regular spreads in $PG(3,5)$ can be obtained by these replacements, a surprisingly robust number.

Given a $k$-web ${\mathcal W}$ of a regular spread $S$ of $PG(3,q)$, let $t$ denote the number of reguli in ${\mathcal W}$ and let $\phi$ be a generator of the Bruck kernel of $S$. For any regulus $R$ of $S$, we call any of the line orbits in the opposite regulus of $R$ under $\phi^{\alpha}$ an {\em $\alpha$-semitransversal} of $R$.  We define a {\em Bruck replacement} for ${\mathcal W}$ to be a set of $t$ $k$-semitransversals, one for each regulus in ${\mathcal W}$, such that the set of points covered by the lines contained in the reguli of ${\mathcal W}$ is exactly the set of points covered by the lines contained in the $k$-semitraversals of the Bruck replacement.

When $k=1$, a $k$-web is a set of $t$ pairwise disjoint reguli in $S$, and a Bruck replacement is a set of $t$ $1$-semitransversals, which are exactly the opposite reguli to the reguli in the $k$-web. When $k=2$, a $k$-web is a nest, and a Bruck replacement is exactly a Bruck replacement for the nest as defined previously. 

Clearly, Bruck replacement for $k$-webs only makes sense when $k$ divides $q+1$, so for the case $q=5$ that we are examining, $k=3$ is a valid choice. Using code very similar to that used to identify nests in $PG(3,5)$, we executed a search for $3$-webs in a regular spread of $PG(3,5)$, with shocking results. Our search identified 25 $3$-webs in the regular spread of $PG(3,5)$, up to isomorphism. Equally surprising was the determination that 15 of these $3$-webs were Bruck replaceable, yielding 13 non-isomorphic spreads of $PG(3,5)$ including the spread which generates $B_8$. Interestingly, two non-isomorphic $3$-webs can be Bruck replaced to obtain $B_8$, with the Hall plane being the only other plane sharing this property. It is also possible to get another regular spread by replacing a $3$-web.

Table~\ref{T2} provides a description of which translation planes of order 25 can be obtained by web replacement.
\noindent
\begin{table}
\centering
\caption{Web replacement in the regular spread of $PG(3,5)$}{\label{T2}}
\begin{tabular}{|c|c|c|c|c|}
\hline
Label & $1$-web & $2$-web & $1$-web + $2$-web & $3$-web\\ \hline\hline
$S_1$ &  &  & & x\\ \hline
$S_2$ & x &  & & x\\ \hline
$S_3$ & x & & & x\\ \hline
$S_4$ & x & & & x\\ \hline
$S_5$ & x & x & &\\ \hline\hline
$A_1$ & & x & & x\\ \hline
$A_2$ & & x & &\\ \hline
$A_3$ & & x & &\\ \hline
$A_4$ & & x & & x\\ \hline
$A_5$ & & & x & x\\ \hline
$A_6$ & & x & &\\ \hline
$A_7$ & & & x &\\ \hline
$A_8$ & & & x & x\\ \hline\hline
$B_1$ & & & & x\\ \hline
$B_2$ & & & &\\ \hline
$B_3$ & & x & &\\ \hline
$B_4$ & & x & & x\\ \hline
$B_5$ & & x & &\\ \hline
$B_6$ & & x & & x\\ \hline
$B_7$ & & x & & x\\ \hline
$B_8$ & & & & x\\ \hline
\end{tabular}
\end{table}

\section{Conclusion}
Given the number and breadth of construction techniques for finite translation planes, we began this research with the assumption that all of the translation planes of order 25 belonged to at least, and probably more than, one infinite family; we were genuinely dumbfounded to find this not to be the case, and that in fact two of these planes are not part of known infinite families. While we have tried to provide some additional context around these planes, the results presented here really provide directions for additional research, rather than representing a culmination of the work.

A next step in this program would be to tackle the translation planes of 49, of which there are 1347, based on a search conducted by Mathon and Royle~\cite{mr}. If the proportion of translation planes of order 49 not in a described infinite family follow the pattern for the order 25 case, we should expect 192 of these planes to not be in known families, but the real number is likely much higher. We believe this represents a great opportunity to discover and extrapolate new construction techniques for finite translation planes.

\bibliographystyle{plain}

\begin{thebibliography}{1}

\bibitem{andre}
J. Andr\'e.
\newblock \"Uber nicht-Desarguessche Ebenen mit transitiver Translations-Gruppe.
\newblock {\em Math. Zeit.}, 60:156--186, 1954.

\bibitem{bdew}
R.D. Baker, J.M. Dover, G.L. Ebert, and K.L. Wantz.
\newblock Hyperbolic fibrations in $PG(3,q)$.
\newblock {\em Europ. J. Combin.}, 20:1--16, 1999.

\bibitem{be:qnest}
R.D. Baker and G.L. Ebert.
\newblock A new class of translation planes.
\newblock {\em Ann. Discrete Math.}, 37:7--20, 1988.

\bibitem{be:q-1nest}
R.D. Baker and G.L. Ebert.
\newblock Nests of size $(q-1)$ and another family of translation planes.
\newblock {\em J. London Math. Soc.}, 38:341--355, 1988.

\bibitem{be:flagtrans}
R.D. Baker and G.L. Ebert.
\newblock Regulus-free spreads of $PG(3,q)$.
\newblock {\em Des. Codes Cryptogr.}, 8:79--89, 1996.

\bibitem{be:nestgap}
R.D. Baker and G.L. Ebert.
\newblock Filling the nest gaps.
\newblock {\em Finite Fields Appl.}, 2:42--61, 1996.

\bibitem{bep}
R.D. Baker, G.L. Ebert and T. Penttila.
\newblock Hyperbolic fibrations and $q$-clans.
\newblock {\em Des. Codes Cryptogr.}, 34:295--305, 2005.

\bibitem{bew}
R.D. Baker, G.L. Ebert, and R. Weida.
\newblock Another Look at Bruen Chains
\newblock {\em J. Combin. Theory Ser. A}, 48:77--90, 1988.

\bibitem{magma}
Wieb Bosma, John Cannon, and Catherine Playoust.
\newblock The Magma algebra system. I. The user language.
\newblock {\em J. Symbolic Comput.}, 24(3--4):235--265, 1997.

\bibitem{bruckbose}
R.H. Bruck and R.C. Bose.
\newblock The construction of translation planes from projective spaces.
\newblock {\em J. Algebra}, 1:85--102, 1964.

\bibitem{bruck}
R.H. Bruck.
\newblock Construction problems of finite projective planes, in {\em Combinatorial Mathematics and Its Applications} (R.C. Bose and T.A. Dowling, Ed.), Ch. 27, pp. 426--514, Univ. of North Carolina Press, Chapel Hill, NC, 1969.

\bibitem{bruen}
A.A. Bruen.
\newblock Inversive geometry and some translation planes, I.
\newblock {\em Geom. Dedicata}, 7:81--98, 1978.

\bibitem{czoak}
T. Czerwinski and D. Oakden.
\newblock The translation planes of order twenty-five.
\newblock {\em J. Combin. Theory Ser. A}, 59:193--217, 1992.

\bibitem{dgt}
F. De Clerck, H. Gevaert, and J.A. Thas.
\newblock Flocks of a quadratic cone in $PG(3,q)$, $q \le 8$.
\newblock {\em Geom. Dedicata}, 26:215--230, 1988.

\bibitem{ebert:q+1nest}
G.L. Ebert.
\newblock Spreads admitting regular elliptic covers.
\newblock {\em Europ. J. Combin.}, 10:319--330, 1989.

\bibitem{fisher}
J.C. Fisher.
\newblock Geometry according to Euclid.
\newblock {\em Amer. Math. Monthly}, 86:260--270, 1979.

\bibitem{foulser}
D.A. Foulser.
\newblock Solvable flag transitive affine groups.
\newblock {\em Math. Z.}, 86:191--204, 1964.

\bibitem{foulserlambda}
D.A. Foulser.
\newblock A generalization of Andr\'e's systems.
\newblock {\em Math. Z.}, 100:380--395, 1967.

\bibitem{hall}
M. Hall, Jr.
\newblock Projective planes.
\newblock {\em Trans. Amer. Math. Soc.}, 54:229--277, 1943.

\bibitem{hering}
C. Hering.
\newblock A new class of quasifields.
\newblock {\em Math. Z.}, 118:56--57, 1970.

\bibitem{jj}
V. Jha and N.L. Johnson.
\newblock Multiple nests.
\newblock {\em Note Mat.}, 23(1):123--149, 2004/2005.

\bibitem{jjb}
N.L. Johnson, V. Jha, and M. Biliotti.
\newblock {\em Handbook of Finite Translation Planes}, Pure and Applied Mathematics, Chapman \& Hall/CRC, Boca Raton, FL, 2007.

\bibitem{jpw}
N.L. Johnson, R. Pomareda, and F.W. Wilke.
\newblock $j$-Planes.
\newblock {\em J. Combin. Theory Ser. A}, 11:83--91, 1991.

\bibitem{mr}
R. Mathon and G.F. Royle.
\newblock The translation planes of order 49.
\newblock {\em Des. Codes Cryptogr.}, 5:57--72, 1995.

\bibitem{moorhouse}
G.E. Moorhouse.
\newblock On projective planes of order less than 32, in {\em Finite Geometries, Groups and Computation} (A. Hulpke et al., Ed.), pp. 149--162, de Gruyter, Berlin, 2006.

\bibitem{moorhouse:web}
\newblock G.E. Moorhouse.
\newblock {\em Projective Planes of Order 25}, 2010, ericmoorhouse.org/pub/planes25. Accessed 8 December 2018.

\bibitem{orr}
W.F. Orr.
\newblock A characterization of subregular planes in finite 3-space.
\newblock {\em Geom. Dedicata}, 5:43--50, 1976.

\bibitem{ostrom}
T.G. Ostrom.
\newblock Translation planes and configurations in Desarguesian planes.
\newblock {\em Arch. Math.}, 11:457--464, 1960.

\bibitem{pabstsherk}
G. Pabst and F.A. Sherk.
\newblock Indicator sets, reguli and a new class of spreads.
\newblock {\em Canad. J. Math.}, 29:132--154, 1977.

\bibitem{prince}
A.R. Prince.
\newblock The construction of replaceable $(q+3)$-nests of reguli in $PG(3,q)$.
\newblock {\em Finite Fields Appl.}, 18:437--444, 2012.

\bibitem{rao16}
M.J.N. Rao, D.J. Rodabaugh, F.W. Wilke and J.L. Zemmer.
\newblock A new class of finite translation planes obtained from the exceptional near-fields.
\newblock {\em J. Combin. Theory Ser. A}, 11:72--92, 1971.

\bibitem{rao17}
M.J.N. Rao and K. Satyanarayana.
\newblock A new class of square order planes.
\newblock {\em J. Combin. Theory Ser. A}, 35:33--42, 1983.

\bibitem{walkerfamily}
M. Walker.
\newblock A class of translation planes.
\newblock {\em Geom. Dedicata}, 5:135--146, 1976.

\bibitem{walkersporadic}
M. Walker.
\newblock A characterization of some translation planes.
\newblock {\em Abh. Math. Sem. Univ. Hamburg}, 49:216--233, 1979.

\bibitem{weida}
R.A. Weida.
\newblock An extension of Bruen chains.
\newblock {\em Geom. Dedicata}, 30:11--21, 1989.

\bibitem{zassenhaus}
H. Zassenhaus.
\newblock \"Uber endlicher Fastk\"orper.
\newblock {\em Abh. Math. Sem. Univ. Hamburg}, 11:187--220, 1936.

\end{thebibliography}

\end{document}